%2multibyte Version: 5.50.0.2953 CodePage: 1254

\documentclass{amsart}
%%%%%%%%%%%%%%%%%%%%%%%%%%%%%%%%%%%%%%%%%%%%%%%%%%%%%%%%%%%%%%%%%%%%%%%%%%%%%%%%%%%%%%%%%%%%%%%%%%%%%%%%%%%%%%%%%%%%%%%%%%%%%%%%%%%%%%%%%%%%%%%%%%%%%%%%%%%%%%%%%%%%%%%%%%%%%%%%%%%%%%%%%%%%%%%%%%%%%%%%%%%%%%%%%%%%%%%%%%%%%%%%%%%%%%%%%%%%%%%%%%%%%%%%%%%%
\usepackage{amsmath}
\usepackage{amsfonts}

\setcounter{MaxMatrixCols}{10}
%TCIDATA{OutputFilter=LATEX.DLL}
%TCIDATA{Version=5.50.0.2953}
%TCIDATA{Codepage=1254}
%TCIDATA{<META NAME="SaveForMode" CONTENT="1">}
%TCIDATA{BibliographyScheme=Manual}
%TCIDATA{Created=Friday, May 11, 2007 15:34:38}
%TCIDATA{LastRevised=Monday, June 22, 2020 18:48:25}
%TCIDATA{<META NAME="GraphicsSave" CONTENT="32">}
%TCIDATA{<META NAME="DocumentShell" CONTENT="Articles\SW\AMS Journal Article">}
%TCIDATA{CSTFile=amsartci.cst}

\newtheorem{theorem}{Theorem}
\theoremstyle{plain}

\newtheorem{corollary}{Corollary}

\newtheorem{definition}{Definition}

\newtheorem{lemma}{Lemma}

\input{tcilatex}

\begin{document}
\title[Trace Formulas for a CFDO]{Trace Formulas for a Conformable
Fractional Diffusion Operator}
\subjclass[2010]{ 26A33, 34B24, 11F72, 34L05, 34L20}
\keywords{Diffusion Operator, Trace Formula, Conformable Fractional.}

\begin{abstract}
In this paper, the regularized trace formulas for a diffusion operator which
include conformable fractional derivatives of order $\alpha $ $\left(
0<\alpha \leq 1\right) $ is obtained.
\end{abstract}

\author{YA\c{S}AR \c{C}AKMAK}
\maketitle

\section{\textbf{Introduction}}

The fractional derivative has an important place in applied mathematics.
Since 1695, the various types of fractional derivatives, usually given
through an integral form, have been introduced by many authors (see [1]-[4]).

In 2014, Khalil et al. gave the definition of conformable fractional
derivative\ (see [5]). Shortly after, Abdeljawad and Atangana et al. shown
the elementary properties of this derivative (see [6], [7]). The derivative
arises in various fields such as quantum mechanics, dynamical systems, time
scale problems, diffusions, conservation of mass, etc. (see [10]-[13]).

The trace of a matrix with finite dimensioanal is the sum of the elements on
the main diagonal and is finite. However, the trace of ordinary differential
operators with infinite dimensional, which is the sum of all eigenvalues, is
not finite. Therefore, the concept of regularize trace for this type
operators, which is finite, is mentioned. The regularized trace formulas
have great importance, especially, in the solution of the inverse problem
according to two spectra.

For about seventy years, the regularized trace formulas for the different
types of differential operators have been investigated. Firstly, in 1953,
Gelfand and Levitan obtained the regularized trace formula for the
Sturm-Liouville operator with Neumann conditions (see [14]). The study led
to the birth of a great and very important theory. After this study, the
theory has been continued for the various operators by many researchers (see
[15]-[46] and references therein). In 2010, Yang obtained the regularized
trace formulas for the diffusion operator i.e., a quadratic pencil of the
Schr\"{o}dinger operator (see [47]). In 2019, Mortazaasl and Jodayree
Akbarfam calculated the regularized trace formula for a conformable
fractional Sturm-Liouville problem (see [48]).

In the present paper, we consider a diffusion operator which include
conformable fractional derivatives of order $\alpha $ $\left( 0<\alpha \leq
1\right) $ instead of the ordinary derivatives in a traditional diffusion
operator. Using the contour integration method, we obtained the regularized
trace formulas for this diffusion operator.$\medskip $

\section{\textbf{Preliminaries}}

In this section, Firstly, we recall known some concepts of the conformable
fractional calculus. Then, we introduce a conformable fractional diffusion
operator with the separated boundary conditions on $\left[ 0,\pi \right] $.

\begin{definition}
Let $f:[0,\infty )\rightarrow 
%TCIMACRO{\U{211d} }%
%BeginExpansion
\mathbb{R}
%EndExpansion
$ be a given function. Then, the conformable fractional derivative of $f$ of
order $\alpha $ with respect to $x$ is defined by%
\begin{equation*}
D_{x}^{\alpha }f(x)=\underset{h\rightarrow 0}{\lim }\dfrac{f(x+hx^{1-\alpha
})-f(x)}{h},\text{ }0<\alpha \leq 1,\text{ }x>0
\end{equation*}%
and%
\begin{equation*}
D_{x}^{\alpha }f(0)=\underset{x\rightarrow 0^{+}}{\lim }D_{x}^{\alpha }f(x).
\end{equation*}
If above limit exist and finite at any point $x$, $f$ is $\alpha -$%
differentiable at $x$ and 
\begin{equation*}
D_{x}^{\alpha }f(x)=x^{1-\alpha }f^{\prime }(x).
\end{equation*}
\end{definition}

\begin{definition}
Let $f:[a,\infty )\rightarrow 
%TCIMACRO{\U{211d} }%
%BeginExpansion
\mathbb{R}
%EndExpansion
$ be a given function. The conformable fractional Integral of $f$ of order $%
\alpha $ is defined by%
\begin{equation*}
I_{\alpha }f(x)=\int_{0}^{x}f(t)d_{\alpha }t=\int_{0}^{x}t^{\alpha -1}f(t)dt,%
\text{ for all }x>0.
\end{equation*}
\end{definition}

\begin{lemma}
Let $f:[a,\infty )\rightarrow 
%TCIMACRO{\U{211d} }%
%BeginExpansion
\mathbb{R}
%EndExpansion
$ be any continuous function. Then, 
\begin{equation*}
D_{x}^{\alpha }I_{\alpha }f(x)=f(x),\text{ for all }x>0.
\end{equation*}
\end{lemma}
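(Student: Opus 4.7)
The plan is to reduce the claim to the classical Fundamental Theorem of Calculus using the explicit formula for $D_x^\alpha$ provided in Definition 1.

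First I would set $F(x) := I_\alpha f(x) = \int_0^x t^{\alpha-1}f(t)\,dt$. Since $f$ is continuous on $[a,\infty)$ (and, for the purposes of applying the conformable derivative at $x>0$, we only need continuity of the integrand away from $0$), the integrand $t \mapsto t^{\alpha-1}f(t)$ is continuous on $(0,\infty)$. By the ordinary Fundamental Theorem of Calculus this gives that $F$ is differentiable at every $x>0$ with
\begin{equation*}
F'(x) = x^{\alpha-1}f(x).
\end{equation*}

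Next I would invoke the last identity in Definition 1, namely $D_x^\alpha g(x) = x^{1-\alpha}g'(x)$, which is applicable to $g=F$ since $F$ is differentiable at $x>0$. Substituting the expression for $F'(x)$ obtained above yields
\begin{equation*}
D_x^\alpha I_\alpha f(x) = x^{1-\alpha} F'(x) = x^{1-\alpha}\cdot x^{\alpha-1}f(x) = f(x),
\end{equation*}
which is the desired equality.

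There is essentially no obstacle in the argument; it is a two-line reduction to the classical FTC via Definition 1. The only minor technical point worth flagging is that the formula $D_x^\alpha g(x)=x^{1-\alpha}g'(x)$ requires ordinary differentiability of $g$ at $x>0$, which is exactly what continuity of $f$ on $(0,\infty)$ guarantees for $g=I_\alpha f$. No behavior at $x=0$ enters, consistent with the statement being for $x>0$.
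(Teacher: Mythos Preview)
Your argument is correct: the reduction to the classical Fundamental Theorem of Calculus via $D_x^\alpha g(x)=x^{1-\alpha}g'(x)$ is exactly the standard proof. The paper itself does not supply a proof of this lemma; it merely records it as a known preliminary fact from the conformable fractional calculus literature (references [5]--[9]), so there is nothing further to compare.
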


\begin{lemma}
Let $f:(a,b)\rightarrow 
%TCIMACRO{\U{211d} }%
%BeginExpansion
\mathbb{R}
%EndExpansion
$ be any differentiable function. Then, 
\begin{equation*}
I_{\alpha }D_{x}^{\alpha }f(x)=f(x)-f(a),\text{ for all }x>0.
\end{equation*}
\end{lemma}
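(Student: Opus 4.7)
The plan is to reduce the statement to the classical Fundamental Theorem of Calculus by exploiting the explicit representation $D_{t}^{\alpha}f(t)=t^{1-\alpha}f'(t)$ supplied at the end of Definition 1. Note first that, since the conformable fractional integral $I_{\alpha}$ has been defined with lower limit $0$, the natural reading of the conclusion is $I_{\alpha}D_{x}^{\alpha}f(x)=f(x)-f(0)$; I will prove the statement under this convention (equivalently, take $a=0$ in the hypothesis).

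First I would simply unfold the definition of $I_{\alpha}$ applied to the function $D_{t}^{\alpha}f$:
\begin{equation*}
I_{\alpha}D_{x}^{\alpha}f(x)=\int_{0}^{x}t^{\alpha-1}\bigl(D_{t}^{\alpha}f(t)\bigr)\,dt.
\end{equation*}
Next I would substitute $D_{t}^{\alpha}f(t)=t^{1-\alpha}f'(t)$, which is legitimate for $t>0$ by Definition 1 since $f$ is differentiable. The two powers of $t$ cancel exactly, so the integral collapses to
\begin{equation*}
I_{\alpha}D_{x}^{\alpha}f(x)=\int_{0}^{x}f'(t)\,dt,
\end{equation*}
and then the ordinary Fundamental Theorem of Calculus finishes the proof by giving $f(x)-f(0)$.

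The only point that requires any care is the behavior of the integrand at the endpoint $t=0$: when $0<\alpha<1$, the weight $t^{\alpha-1}$ is singular there, but it is annihilated by the factor $t^{1-\alpha}$ coming from $D_{t}^{\alpha}f$, so the integrand is in fact the continuous function $f'(t)$ and no improper-integral issue arises. As a consistency check I would also verify that the cancellation is compatible with Lemma 3 (the companion one-sided inverse relation) and that in the limit $\alpha=1$ the identity collapses to the classical Fundamental Theorem of Calculus. I do not anticipate any genuine obstacle here beyond fixing the domain convention so that the lower limit of the integral matches the base point $a$ appearing in the right-hand side.
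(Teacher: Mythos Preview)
Your argument is correct and is exactly the standard one: unfold the definitions, cancel $t^{\alpha-1}\cdot t^{1-\alpha}$, and apply the ordinary Fundamental Theorem of Calculus. You also correctly flag the domain mismatch in the paper (the integral $I_{\alpha}$ is defined with lower limit $0$ while the statement writes $f(a)$); taking $a=0$, or equivalently redefining $I_{\alpha}$ with lower limit $a$, is the only sensible reading.

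There is nothing to compare against: the paper does not supply a proof of this lemma. It is listed among the preliminary facts on conformable fractional calculus and is attributed to references [5--9] (see the sentence ``More detail knowledge about the conformable fractional calculus can be seen in [5-9]'' immediately following Definition~5). So your proposal is not an alternative to the paper's proof but rather a fill-in for a result the paper simply quotes.
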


\begin{definition}
\textbf{(}$\alpha -$\textbf{integration by parts): }Let $f,g:[a,b]%
\rightarrow 
%TCIMACRO{\U{211d} }%
%BeginExpansion
\mathbb{R}
%EndExpansion
$ be two conformable fractional differentiable functions. Then,%
\begin{equation*}
\int_{a}^{b}f(x)D_{x}^{\alpha }g(x)d_{\alpha }x=\left. f(x)g(x)\right\vert
_{a}^{b}-\int_{a}^{b}g(x)D_{x}^{\alpha }f(x)d_{\alpha }x.
\end{equation*}
\end{definition}

\begin{lemma}
The space $L_{\alpha }^{2}\left( 0,a\right) $ associated with the inner
product for $f,$ $g\in L_{\alpha }^{2}\left( 0,a\right) $%
\begin{equation*}
\left\langle f,g\right\rangle :=\int_{0}^{a}f(x)\overline{g(x)}d_{\alpha }x
\end{equation*}%
is a Hilbert space.
\end{lemma}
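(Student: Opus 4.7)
The plan is to establish the two independent pieces required for the claim: first, that $\langle\cdot,\cdot\rangle$ really is an inner product on $L_\alpha^2(0,a)$, and second, that the resulting normed space is complete. Throughout I would rely on the identity $\int_0^a f(x)\,d_\alpha x=\int_0^a x^{\alpha-1}f(x)\,dx$ from Definition 2, so that $L_\alpha^2(0,a)$ is exactly the weighted Lebesgue space of (equivalence classes of) measurable functions with $\int_0^a |f(x)|^2 x^{\alpha-1}\,dx<\infty$, with weight $w(x)=x^{\alpha-1}$.

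First I would verify the inner-product axioms. Sesquilinearity and conjugate symmetry are immediate from the linearity and conjugation properties of the ordinary Lebesgue integral, once the integrand is rewritten with the weight $x^{\alpha-1}$. Positive definiteness follows because $x^{\alpha-1}>0$ almost everywhere on $(0,a)$, so $\langle f,f\rangle=\int_0^a |f(x)|^2 x^{\alpha-1}\,dx=0$ forces $f=0$ almost everywhere, i.e. $f$ is the zero element of the quotient space. Along the way the Cauchy--Schwarz inequality with respect to the weight $x^{\alpha-1}$ shows $\langle f,g\rangle$ is finite whenever $f,g\in L_\alpha^2(0,a)$, so the pairing is well defined.

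For completeness, the cleanest approach is to exhibit a unitary isomorphism with a standard $L^2$ space and transfer completeness across it. Under the substitution $t=x^\alpha/\alpha$, one has $dt=x^{\alpha-1}\,dx$ and the map
\begin{equation*}
U\colon L_\alpha^2(0,a)\longrightarrow L^2\!\left(0,\tfrac{a^\alpha}{\alpha}\right),\qquad (Uf)(t)=f\!\left((\alpha t)^{1/\alpha}\right),
\end{equation*}
is a bijection satisfying $\|Uf\|_{L^2}^2=\int_0^{a^\alpha/\alpha}|(Uf)(t)|^2\,dt=\int_0^a|f(x)|^2 x^{\alpha-1}\,dx=\|f\|_{L_\alpha^2}^2$. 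Hence $U$ is an isometric isomorphism. Since $L^2(0,a^\alpha/\alpha)$ is complete by the Riesz--Fischer theorem, any Cauchy sequence $\{f_n\}\subset L_\alpha^2(0,a)$ has the property that $\{Uf_n\}$ is Cauchy in $L^2$, hence converges to some $g$, and then $U^{-1}g$ is the required limit of $\{f_n\}$ in $L_\alpha^2(0,a)$.

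The only step with real content is completeness; the inner-product axioms are essentially bookkeeping. The main obstacle (and really the only subtle point) is justifying the change of variables defining $U$: one must check that $U$ preserves measurability and null sets so that the passage between the two Hilbert spaces is legitimate on equivalence classes. This is a standard consequence of the fact that $x\mapsto x^\alpha/\alpha$ is a $C^1$ diffeomorphism of $(0,a)$ onto $(0,a^\alpha/\alpha)$ with a positive Jacobian, and once it is recorded the rest of the argument is formal.
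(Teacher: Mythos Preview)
Your argument is correct. The identification of $L_\alpha^2(0,a)$ with the weighted space $L^2\bigl((0,a),x^{\alpha-1}\,dx\bigr)$ via Definition~2 is exactly right, the inner-product axioms follow as you say, and the change of variables $t=x^\alpha/\alpha$ gives a well-defined unitary $U$ onto $L^2(0,a^\alpha/\alpha)$, from which completeness is inherited. The only remark I would add is that one can bypass the explicit isomorphism $U$ altogether: since $x^{\alpha-1}>0$ a.e.\ on $(0,a)$, the measure $d\mu=x^{\alpha-1}\,dx$ is a genuine $\sigma$-finite Borel measure, and $L^2(\mu)$ is complete for any such measure by the standard Riesz--Fischer argument. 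Your isomorphism is more explicit, though, and has the advantage of making the connection with the ordinary $L^2$ space on $(0,a^\alpha/\alpha)$ transparent---which is useful elsewhere in the paper, where the variable $x^\alpha/\alpha$ appears repeatedly.

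As for comparison with the paper: there is nothing to compare. The paper does not prove this lemma; it is stated as background and the reader is referred to [5]--[9] for details on the conformable fractional calculus. Your write-up therefore supplies a proof the paper omits, and either of the two routes above (your explicit unitary, or the general weighted-$L^2$ completeness) would be an acceptable way to fill the gap.
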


\begin{definition}
The Sobolev space $W_{\alpha }^{2}\left( 0,a\right) $ consists of all
functions on $\left[ 0,a\right] $, such that $f(x)$ is absolutely continuous
and $D_{x}^{\alpha }f(x)\in L_{\alpha }^{2}\left( 0,a\right) .$
\end{definition}

\begin{definition}
Let $y\left( x\right) $ and $z\left( x\right) $ be $\alpha $-differentiable
functions on $\left[ 0,\pi \right] .$ The fractional Wronskian of $y\left(
x\right) $ and $z\left( x\right) $ is defined as 
\begin{equation*}
W_{\alpha }\left[ y\left( x\right) ,z\left( x\right) \right] :=\left\vert 
\begin{array}{cc}
y\left( x\right) & z\left( x\right) \medskip \\ 
D_{x}^{\alpha }y\left( x\right) & D_{x}^{\alpha }z\left( x\right)%
\end{array}%
\right\vert =y\left( x\right) D_{x}^{\alpha }z\left( x\right) -z\left(
x\right) D_{x}^{\alpha }y\left( x\right) .
\end{equation*}
\end{definition}

More detail knowledge about the conformable fractional calculus can be seen
in [5-9].

Now, let us consider the boundary value problem $L_{\alpha
}(p(x),q(x),h,H)=L_{\alpha }$, called as the conformable fractional
diffusion operator (CFDO), of the form%
\begin{eqnarray}
&&\text{\ }\left. \ell _{\alpha }y:=-D_{x}^{\alpha }D_{x}^{\alpha }y+\left[
2\lambda p(x)+q(x)\right] y=\lambda ^{2}y\text{, \ \ }0<x<\pi \right.
\medskip \\
&&\text{ }\left. U(y):=D_{x}^{\alpha }y(0)-hy(0)=0\right. \medskip \\
&&\text{ }\left. V(y):=D_{x}^{\alpha }y(\pi )+Hy(\pi )=0\right.
\end{eqnarray}%
where $\lambda $ is the spectral parameter$,$ $h,H\in 
%TCIMACRO{\U{211d} }%
%BeginExpansion
\mathbb{R}
%EndExpansion
,$ $p(x),$ $D_{x}^{\alpha }p(x),$ $q(x)\in W_{\alpha }^{2}\left( 0,\pi
\right) $ are real valued functions, $0<\alpha \leq 1$ and $p(x)\neq $ const.

Let the functions $\varphi \left( x,\lambda \right) $ and $\psi \left(
x,\lambda \right) $ be the solutions of the equation $(1)$ satisfying the
initial conditions%
\begin{equation}
\varphi \left( 0,\lambda \right) =1,\text{ }D_{x}^{\alpha }\varphi \left(
0,\lambda \right) =h\text{,}
\end{equation}%
\begin{equation}
\psi \left( \pi ,\lambda \right) =1,\text{ }D_{x}^{\alpha }\psi \left( \pi
,\lambda \right) =-H
\end{equation}%
\newline
respectively. It is clear that $U\left( \varphi \right) =0,$ $V\left( \psi
\right) =0.$

Denote%
\begin{equation}
\Delta \left( \lambda \right) =W_{\alpha }\left[ \psi \left( x,\lambda
\right) ,\varphi \left( x,\lambda \right) \right] .
\end{equation}

It is proven in [49] that $W_{\alpha }$ does not depend on $x$ and putting $%
x=0$ and $x=\pi $ in (6) it can be written as%
\begin{equation}
\Delta \left( \lambda \right) =V\left( \varphi \right) =-U\left( \psi
\right) .
\end{equation}

\begin{definition}
The function $\Delta \left( \lambda \right) $ is called as the
characteristic function of the problem $L_{\alpha }.$

\begin{lemma}
For $\left\vert \lambda \right\vert \rightarrow \infty $ and each fixed $%
\alpha $ the following asymptotic formulas hold:%
\begin{equation}
\varphi \left( x,\lambda \right) =\cos \left( \tfrac{\lambda }{\alpha }%
x^{\alpha }-Q(x)\right) +O\left( \tfrac{1}{\left\vert \lambda \right\vert }%
\exp \left( \tfrac{\left\vert \tau \right\vert }{\alpha }x^{\alpha }\right)
\right) ,
\end{equation}%
\begin{equation}
D_{x}^{\alpha }\varphi \left( x,\lambda \right) =-\left( \lambda
-p(x)\right) \sin \left( \tfrac{\lambda }{\alpha }x^{\alpha }-Q(x)\right)
+O\left( \exp \left( \tfrac{\left\vert \tau \right\vert }{\alpha }x^{\alpha
}\right) \right)
\end{equation}%
\newline
where $\lambda =\sigma +i\tau $ and%
\begin{equation}
Q(x):=\int_{0}^{x}p(t)d_{\alpha }t.
\end{equation}

\begin{proof}
Firstly, we rewritten equation (1) as%
\begin{equation}
D_{x}^{\alpha }D_{x}^{\alpha }y+\frac{D_{x}^{\alpha }p(x)}{\lambda -p(x)}%
D_{x}^{\alpha }y+\left( \lambda -p(x)\right) ^{2}y=\left(
q(x)+p^{2}(x)\right) y+\frac{D_{x}^{\alpha }p(x)}{\lambda -p(x)}%
D_{x}^{\alpha }y.
\end{equation}

It is easily shown that the system of functions $\left\{ \cos \left( \frac{%
\lambda }{\alpha }x^{\alpha }-Q(x)\right) ,\text{ }\sin \left( \frac{\lambda 
}{\alpha }x^{\alpha }-Q(x)\right) \right\} $ is a fundamental system for the
following differential equation 
\begin{equation}
D_{x}^{\alpha }D_{x}^{\alpha }y+\frac{D_{x}^{\alpha }p(x)}{\lambda -p(x)}%
D_{x}^{\alpha }y+\left( \lambda -p(x)\right) ^{2}y=0.
\end{equation}

Thus, the solution of equation (1) satisfying the initial conditions (4)
provides the following integral equations%
\begin{eqnarray}
&&\left. \varphi (x,\lambda )=\cos \left( \tfrac{\lambda }{\alpha }x^{\alpha
}-Q(x)\right) +\tfrac{h}{\lambda -p(0)}\sin \left( \tfrac{\lambda }{\alpha }%
x^{\alpha }-Q(x)\right) \right. \medskip   \notag \\
&&\left. +\int_{0}^{x}\tfrac{\sin \left[ \frac{\lambda }{\alpha }\left(
x^{\alpha }-t^{\alpha }\right) -Q(x)+Q(t)\right] }{\lambda -p(t)}\left[
\left( q(t)+p^{2}(t)\right) \varphi (t,\lambda )+\tfrac{D_{t}^{\alpha }p(t)}{%
\lambda -p(t)}D_{t}^{\alpha }\varphi (t,\lambda )\right] d_{\alpha }t\right. 
\end{eqnarray}%
and by $\alpha -$differentiating (13) with respect to $x$%
\begin{eqnarray}
&&\left. D_{x}^{\alpha }\varphi (x,\lambda )=-\left( \lambda -p(x)\right)
\left\{ \sin \left( \tfrac{\lambda }{\alpha }x^{\alpha }-Q(x)\right) -\tfrac{%
h}{\lambda -p(0)}\cos \left( \tfrac{\lambda }{\alpha }x^{\alpha
}-Q(x)\right) \right. \right. \medskip   \notag \\
&&+\left. \int_{0}^{x}\tfrac{\cos \left[ \frac{\lambda }{\alpha }\left(
x^{\alpha }-t^{\alpha }\right) -Q(t)\right] }{\lambda -p(t)}\left[ \left(
q(t)+p^{2}(t)\right) \varphi (t,\lambda )+\tfrac{D_{t}^{\alpha }p(t)}{%
\lambda -p(t)}D_{t}^{\alpha }\varphi (t,\lambda )\right] d_{\alpha
}t\right\} .
\end{eqnarray}

For each fixed $\alpha ,$ denote%
\begin{equation*}
\mu _{1}\left( \lambda \right) :=\underset{0\leq x\leq \pi }{\max }%
\left\vert \varphi (x,\lambda )\exp \left( -\tfrac{\left\vert \tau
\right\vert }{\alpha }x^{\alpha }\right) \right\vert \text{ and }\mu
_{2}\left( \lambda \right) :=\underset{0\leq x\leq \pi }{\max }\left\vert
D_{x}^{\alpha }\varphi (x,\lambda )\exp \left( -\tfrac{\left\vert \tau
\right\vert }{\alpha }x^{\alpha }\right) \right\vert .
\end{equation*}

Since $\left\vert \cos \left( \tfrac{\lambda }{\alpha }x^{\alpha
}-Q(x)\right) \right\vert \leq \exp \left( \tfrac{\left\vert \tau
\right\vert }{\alpha }x^{\alpha }\right) $ and $\left\vert \sin \left( 
\tfrac{\lambda }{\alpha }x^{\alpha }-Q(x)\right) \right\vert \leq \exp
\left( \tfrac{\left\vert \tau \right\vert }{\alpha }x^{\alpha }\right) ,$
from (13) and (14) we find%
\begin{equation*}
\mu _{1}\left( \lambda \right) \leq C\left( 1+\tfrac{\left\vert h\right\vert 
}{\left\vert \lambda \right\vert }+\tfrac{\mu _{1}\left( \lambda \right) }{%
\left\vert \lambda \right\vert }+\tfrac{\mu _{2}\left( \lambda \right) }{%
\left\vert \lambda \right\vert ^{2}}\right) ,\text{ }\mu _{2}\left( \lambda
\right) \leq C\left( \left\vert \lambda \right\vert +\left\vert h\right\vert
+\mu _{1}\left( \lambda \right) +\tfrac{\mu _{2}\left( \lambda \right) }{%
\left\vert \lambda \right\vert }\right) .
\end{equation*}

Hence, we get 
\begin{equation*}
\mu _{1}\left( \lambda \right) \leq C,\mu _{2}\left( \lambda \right) \leq
C\left\vert \lambda \right\vert
\end{equation*}%
or%
\begin{equation*}
\varphi \left( x,\lambda \right) =O\left( \exp \left( \tfrac{\left\vert \tau
\right\vert }{\alpha }x^{\alpha }\right) \right) ,\text{ }D_{x}^{\alpha
}\varphi \left( x,\lambda \right) =O\left( \lambda \exp \left( \tfrac{%
\left\vert \tau \right\vert }{\alpha }x^{\alpha }\right) \right) ,\text{ }%
\left\vert \lambda \right\vert \rightarrow \infty .
\end{equation*}

Substituting these into (13) and (14) we obtain (8) and (9).
\end{proof}
\end{lemma}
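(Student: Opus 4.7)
The plan is to derive integral representations for $\varphi(x,\lambda)$ and $D_x^{\alpha}\varphi(x,\lambda)$ by treating equation (1) as a perturbation of the solvable equation (12), and then to extract the asymptotic behavior as $|\lambda|\to\infty$ through a bootstrap argument on weighted sup-norms.

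First I would rewrite (1) in the form (11), putting the $q(x)+p^{2}(x)$ terms and the first-order term coming from the $\alpha$-derivative of $p(x)$ on the right-hand side, so that the left-hand side becomes the homogeneous equation (12). I would then verify directly, using $D_{x}^{\alpha}f(x)=x^{1-\alpha}f'(x)$ and the chain rule, that the pair $\{\cos(\tfrac{\lambda}{\alpha}x^{\alpha}-Q(x)),\ \sin(\tfrac{\lambda}{\alpha}x^{\alpha}-Q(x))\}$ solves (12); a short calculation gives that its fractional Wronskian equals $\lambda-p(x)$, which explains the denominator $\lambda-p(t)$ appearing in (13) and (14). Variation of parameters combined with the initial conditions (4) then yields the integral equation (13) for $\varphi$, and $\alpha$-differentiating (using Definition 5 or direct differentiation of the trigonometric arguments, noting that $D_{x}^{\alpha}Q(x)=p(x)$) produces (14).

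Next, I would introduce the weighted quantities $\mu_{1}(\lambda)$ and $\mu_{2}(\lambda)$ as the authors do. Using the trivial bounds $|\cos z|,|\sin z|\le\exp|\operatorname{Im}z|$ with $z=\tfrac{\lambda}{\alpha}x^{\alpha}-Q(x)$ (so that $|\operatorname{Im}z|\le\tfrac{|\tau|}{\alpha}x^{\alpha}$, since $Q$ is real), I would estimate each term of (13) and (14): the free terms contribute $O(1)$ and $O(|\lambda|)$, while the integrals contribute terms containing $\mu_{1}/|\lambda|$ and $\mu_{2}/|\lambda|^{2}$ in (13), and $\mu_{1}$ and $\mu_{2}/|\lambda|$ in (14). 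Solving the resulting $2\times 2$ linear system of inequalities for large $|\lambda|$ yields $\mu_{1}(\lambda)\le C$ and $\mu_{2}(\lambda)\le C|\lambda|$, that is, the crude bounds $\varphi=O(\exp(\tfrac{|\tau|}{\alpha}x^{\alpha}))$ and $D_{x}^{\alpha}\varphi=O(\lambda\exp(\tfrac{|\tau|}{\alpha}x^{\alpha}))$.

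Finally, I would feed these crude bounds back into (13) and (14). The integrand in (13) is then $O(\exp(\tfrac{|\tau|}{\alpha}t^{\alpha}))$ divided by $\lambda-p(t)$, which after multiplying by the oscillatory kernel of size $\exp(\tfrac{|\tau|}{\alpha}(x^{\alpha}-t^{\alpha}))$ yields an integral of order $|\lambda|^{-1}\exp(\tfrac{|\tau|}{\alpha}x^{\alpha})$; together with the free term this gives (8). An analogous substitution in (14), where the factor $\lambda-p(x)$ in front is absorbed by the leading $-\sin$ term, yields (9). The main obstacle I anticipate is the careful handling of the factor $\lambda-p(x)$: one must show the fundamental system really solves (12) with the correct Wronskian so that the variation-of-parameters formula produces the kernels written in (13) and (14), and then track exactly how $\lambda-p(t)$ in the denominators cancels against $\lambda-p(x)$ in the prefactor when deriving (14). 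Once these algebraic checks are in place, the asymptotic estimates reduce to routine exponential bookkeeping.
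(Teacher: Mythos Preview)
Your proposal is correct and follows essentially the same line as the paper's proof: rewrite (1) as (11), use the fundamental system for (12) and variation of parameters to obtain the integral equations (13)--(14), establish the a priori bounds $\mu_1\le C$, $\mu_2\le C|\lambda|$ via the coupled inequalities, and then substitute these back into (13)--(14) to read off (8)--(9). Your explicit remark that the fractional Wronskian of the fundamental pair equals $\lambda-p(x)$ is a helpful addition, since the paper simply asserts the form of (13) without isolating that point.
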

\end{definition}

Applying successive approximations method to the equations (13), we can have
the more detalied asymptotic of the function $\varphi \left( x,\lambda
\right) $ as follows%
\begin{eqnarray}
&&\left. \varphi (x,\lambda )=\cos \left( \tfrac{\lambda }{\alpha }x^{\alpha
}-Q(x)\right) +\tfrac{p(x)-p(0)}{2\lambda }\cos \left( \tfrac{\lambda }{%
\alpha }x^{\alpha }-Q(x)\right) \right. \medskip  \notag \\
&&+\tfrac{1}{\lambda }\left( h+\tfrac{1}{2}\int_{0}^{x}\left(
q(t)+p^{2}(t)\right) d_{\alpha }t\right) \sin \left( \tfrac{\lambda }{\alpha 
}x^{\alpha }-Q(x)\right) \medskip  \notag \\
&&+\tfrac{1}{2\lambda }\int_{0}^{x}\left( q(t)+p^{2}(t)\right) \sin \left( 
\tfrac{\lambda }{\alpha }\left( x^{\alpha }-2t^{\alpha }\right)
-Q(x)+2Q(t)\right) d_{\alpha }t\medskip  \notag \\
&&-\tfrac{1}{2\lambda }\int_{0}^{x}D_{t}^{\alpha }p(t)\cos \left( \tfrac{%
\lambda }{\alpha }\left( x^{\alpha }-2t^{\alpha }\right) -Q(x)+2Q(t)\right)
d_{\alpha }t\medskip  \notag \\
&&+\tfrac{1}{\lambda ^{2}}\left[ \tfrac{h\left( p(x)+p(0)\right) }{2}\right.
\medskip \\
&&\left. +\tfrac{1}{4}\int_{0}^{x}\left( q(t)+p^{2}(t)\right) \left(
p(x)-p(0)+2p(t)\right) d_{\alpha }t\right] \sin \left( \tfrac{\lambda }{%
\alpha }x^{\alpha }-Q(x)\right) \medskip  \notag \\
&&+\tfrac{1}{\lambda ^{2}}\left[ \tfrac{p^{1+\alpha }(x)-p^{1+\alpha }(0)}{2}%
+\tfrac{\left( p(x)-p(0)\right) ^{1+\alpha }}{4\left( 1+\alpha \right) }-%
\tfrac{h}{2}\int_{0}^{x}\left( q(t)+p^{2}(t)\right) d_{\alpha }t\right.
\medskip  \notag \\
&&\left. -\tfrac{1}{8}\left( \int_{0}^{x}\left( q(t)+p^{2}(t)\right)
d_{\alpha }t\right) ^{2}\right] \cos \left( \tfrac{\lambda }{\alpha }%
x^{\alpha }-Q(x)\right) \medskip  \notag \\
&&+O\left( \tfrac{1}{\left\vert \lambda \right\vert ^{3}}\exp \left( \tfrac{%
\left\vert \tau \right\vert }{\alpha }x^{\alpha }\right) \right) ,\text{ }%
\left\vert \lambda \right\vert \rightarrow \infty ,  \notag
\end{eqnarray}
uniformly with respect to $x\in \lbrack 0,\pi ]$ for each fixed $\alpha .$

The eigenvalues of $L_{\alpha }$ coincide with the zeros of its
characteristic function $\Delta (\lambda )=V\left( \varphi \right)
=D_{x}^{\alpha }\varphi (\pi ,\lambda )+H\varphi (\pi ,\lambda ).$ Thus,
using the formula (15) we can establish the following asymptotic%
\begin{eqnarray}
&&\left. \Delta (\lambda )=-\lambda \sin \left( \tfrac{\lambda }{\alpha }\pi
^{\alpha }-c_{0}\right) +\tfrac{\left( p(\pi )+p(0)\right) }{2}\sin \left( 
\tfrac{\lambda }{\alpha }\pi ^{\alpha }-c_{0}\right) \right. \medskip  \notag
\\
&&+c_{1}\cos \left( \tfrac{\lambda }{\alpha }\pi ^{\alpha }-c_{0}\right) +%
\tfrac{c_{2}}{\lambda }\sin \left( \tfrac{\lambda }{\alpha }\pi ^{\alpha
}-c_{0}\right) +\tfrac{c_{3}}{\lambda }\cos \left( \tfrac{\lambda }{\alpha }%
\pi ^{\alpha }-c_{0}\right) \medskip  \notag \\
&&+\tfrac{1}{2}\int_{0}^{\pi }\left( q(t)+p^{2}(t)\right) \sin \left[ \tfrac{%
\lambda }{\alpha }\left( \pi ^{\alpha }-2t^{\alpha }\right) -Q(\pi )+2Q(t)%
\right] d_{\alpha }t\medskip \\
&&+\tfrac{1}{2}\int_{0}^{\pi }D_{t}^{\alpha }p(t)\cos \left[ \tfrac{\lambda 
}{\alpha }\left( \pi ^{\alpha }-2t^{\alpha }\right) -Q(\pi )+2Q(t)\right]
d_{\alpha }t\medskip  \notag \\
&&+O\left( \tfrac{1}{\left\vert \lambda \right\vert ^{2}}\exp \left( \tfrac{%
\left\vert \tau \right\vert }{\alpha }\pi ^{\alpha }\right) \right) ,\text{ }%
\left\vert \lambda \right\vert \rightarrow \infty ,  \notag
\end{eqnarray}%
where, for each fixed $\alpha $\medskip

$c_{0}=Q(\pi )=\int_{0}^{\pi }p(t)d_{\alpha }t,$\medskip

$c_{1}=h+H+\tfrac{1}{2}\int_{0}^{\pi }\left( q(t)+p^{2}(t)\right) d_{\alpha
}t,$\medskip

$c_{2}=\tfrac{p(\pi )\left( p(\pi )-p(0)\right) }{2}-\tfrac{p^{1+\alpha
}(\pi )-p^{1+\alpha }(0)}{2}-\tfrac{\left( p(\pi )-p(0)\right) ^{1+\alpha }}{%
4\left( 1+\alpha \right) }+hH$\medskip

$+\tfrac{h+H}{2}\int_{0}^{\pi }\left( q(t)+p^{2}(t)\right) d_{\alpha }t+%
\tfrac{1}{8}\left( \int_{0}^{\pi }\left( q(t)+p^{2}(t)\right) d_{\alpha
}t\right) ^{2},$\medskip

$c_{3}=\tfrac{\left( H-h\right) \left( p(\pi )-p(0)\right) }{2}+\tfrac{1}{4}%
\int_{0}^{\pi }\left( q(t)+p^{2}(t)\right) \left( 2p(t)-p(\pi )-p(0)\right)
d_{\alpha }t.$\medskip

Take a circle $\Gamma _{N}=\left\{ \left. \lambda \right\vert \text{ }%
\left\vert \lambda \right\vert =\tfrac{\alpha }{\pi ^{\alpha -1}}\left( N+%
\tfrac{1}{2}\right) ,\text{ }N=0,1,2,\ldots \right\} $ in the $\lambda -$%
plane. By the standard method using (16) and Rouche's theorem (see [50]) and
taking $\Delta (\lambda _{n})=0$\ one can prove that in the circle $\Gamma
_{N}$, there exist exactly $\left\vert n\right\vert $ eigenvalues $\lambda
_{n}$ and have the form%
\begin{equation}
\left. \lambda _{n}=\frac{n\alpha }{\pi ^{\alpha -1}}+\frac{\alpha c_{0}}{%
\pi ^{\alpha }}+\frac{c_{1}+A_{n}}{n\pi }+O\left( \dfrac{1}{n^{2}}\right)
\right. ,\text{ }\left\vert n\right\vert \rightarrow \infty ,
\end{equation}%
where $n\in 
%TCIMACRO{\U{2124} }%
%BeginExpansion
\mathbb{Z}
%EndExpansion
$ and for each fixed $\alpha $%
\begin{eqnarray*}
&&\left. A_{n}=\tfrac{1}{2}\int_{0}^{\pi }\left( q(t)+p^{2}(t)\right) \cos
\left( \tfrac{2nt^{\alpha }}{\pi ^{\alpha -1}}+\tfrac{2c_{0}t^{\alpha }}{\pi
^{\alpha }}-2Q(t)\right) d_{\alpha }t\right. \medskip \\
&&-\left. \tfrac{1}{2}\int_{0}^{\pi }D_{t}^{\alpha }p(t)\sin \left( \tfrac{%
2nt^{\alpha }}{\pi ^{\alpha -1}}+\tfrac{2c_{0}t^{\alpha }}{\pi ^{\alpha }}%
-2Q(t)\right) d_{\alpha }t.\right.
\end{eqnarray*}

\begin{corollary}
According to (17) for each fixed $\alpha $ and sufficiently large $%
\left\vert n\right\vert $ the eigenvalues $\lambda _{n}$ are real and simple.
\end{corollary}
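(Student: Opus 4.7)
The plan is to combine the real-coefficient structure of the problem with the asymptotic (17). Since $p$, $q$ are real-valued and $h,H\in\mathbb{R}$, the characteristic function satisfies $\overline{\Delta(\lambda)}=\Delta(\overline{\lambda})$, so its zero set is invariant under complex conjugation. From (17), the leading part of $\lambda_n$, namely $\frac{n\alpha}{\pi^{\alpha-1}}+\frac{\alpha c_{0}}{\pi^{\alpha}}$, is real, and for distinct $n$ these asymptotic centers are separated by approximately $\alpha/\pi^{\alpha-1}$. Thus the imaginary part of $\lambda_n$ can only arise from the $O(1/n^{2})$ remainder, which is eventually far smaller than the minimum spacing between neighboring centers. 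Applying Rouch\'e's theorem on a disk of radius $o(1)$ about each asymptotic center (a localized refinement of the argument already used on $\Gamma_N$) produces exactly one eigenvalue in that disk; since $\overline{\lambda_n}$ also lies there, one is forced to conclude $\overline{\lambda_n}=\lambda_n$, i.e., $\lambda_n\in\mathbb{R}$ for $|n|$ sufficiently large.

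For simplicity, I would show $\Delta'(\lambda_n)\neq 0$. Differentiating (16) in $\lambda$, the dominant contribution comes from the derivative of $-\lambda\sin\bigl(\tfrac{\lambda}{\alpha}\pi^{\alpha}-c_{0}\bigr)$, yielding $-\sin\bigl(\tfrac{\lambda}{\alpha}\pi^{\alpha}-c_{0}\bigr)-\tfrac{\pi^{\alpha}}{\alpha}\lambda\cos\bigl(\tfrac{\lambda}{\alpha}\pi^{\alpha}-c_{0}\bigr)$, with lower-order corrections from the remaining terms and from the integrals (whose $\lambda$-derivatives gain a factor $\pi^{\alpha}/\alpha$ from the trigonometric phase but are otherwise of the same order). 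Substituting a real $\lambda_n$ satisfying (17) gives $\tfrac{\lambda_n}{\alpha}\pi^{\alpha}-c_{0}=n\pi+O(1/n)$, so the sine vanishes at order $O(1/n)$ while the cosine tends to $(-1)^{n}$. Hence $\Delta'(\lambda_n)=(-1)^{n+1}\tfrac{\pi^{\alpha}}{\alpha}\lambda_n\,(1+o(1))$, which is nonzero for $|n|$ large, yielding that $\lambda_n$ is a simple zero of $\Delta$.

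The main obstacle is justifying that (16) may be differentiated in $\lambda$ term by term with a uniform remainder estimate on horizontal strips. I would handle this by deriving an asymptotic for $\partial_\lambda\Delta(\lambda)$ via the same successive-approximations procedure applied to the $\lambda$-derivatives of the integral representations (13)--(14), establishing bounds on $\partial_\lambda\varphi$ and $\partial_\lambda D_x^{\alpha}\varphi$ analogous to those obtained for $\mu_1(\lambda)$ and $\mu_2(\lambda)$. A secondary bookkeeping issue is isolating each $\lambda_n$ together with its conjugate in a single Rouch\'e disk; this is routine once the spacing $\sim\alpha/\pi^{\alpha-1}$ is compared against the $O(1/n^{2})$ error in (17).
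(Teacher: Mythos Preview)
The paper offers no proof of this corollary; it is stated as an immediate consequence of (17) together with the Rouch\'e counting on $\Gamma_N$ that precedes it. Your proposal supplies precisely the details the paper leaves implicit, along the expected lines, and is correct.

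One efficiency remark: the localized Rouch\'e argument you invoke for reality already delivers simplicity, since it produces exactly one zero \emph{counted with multiplicity} in each small disk about the asymptotic center $\tfrac{n\alpha}{\pi^{\alpha-1}}+\tfrac{\alpha c_0}{\pi^{\alpha}}$. The separate computation of $\dot\Delta(\lambda_n)$ is therefore redundant, though it is a legitimate alternative route and your leading-order evaluation $\dot\Delta(\lambda_n)=(-1)^{n+1}\tfrac{\pi^{\alpha}}{\alpha}\lambda_n\bigl(1+o(1)\bigr)$ is correct. If you keep the $\dot\Delta$ argument, the justification you flag---differentiating the integral representations (13)--(14) in $\lambda$ and bounding $\partial_\lambda\varphi$, $\partial_\lambda D_x^{\alpha}\varphi$ as you did $\mu_1,\mu_2$---is indeed the standard way to legitimize term-by-term differentiation of (16).
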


\section{\textbf{Main Results}}

In this section, using the contour integration method, we will find formulas
which are so-called regularized trace formulas.

\begin{theorem}
Let $\left\{ \lambda _{n}\right\} _{n\geq 0}$ be the sequence of the
eigenvalues of the problem $L_{\alpha }$. Then, for each fixed $\alpha ,$
the following trace formulas are valid:%
\begin{eqnarray}
&&\left. 2\left( \lambda _{0}-c_{0}\right) +\sum_{n=1}^{\infty }\left[
\lambda _{n}+\lambda _{-n}-2c_{0}-\tfrac{1}{\alpha \pi ^{1-\alpha }}\tfrac{%
B_{n}}{n}\right] =\tfrac{p(\pi )+p(0)-2c_{0}}{2}\right. \medskip  \notag \\
&&-\tfrac{1}{2}\int_{0}^{\pi }\left( 1-\tfrac{2t^{\alpha }}{\alpha }\right)
\left( q(t)+p^{2}(t)\right) \sin 2Q(t)d_{\alpha }t\medskip \\
&&+\tfrac{1}{2}\int_{0}^{\pi }\left( 1-\tfrac{2t^{\alpha }}{\alpha }\right)
D_{t}^{\alpha }p(t)\cos 2Q(t)d_{\alpha }t  \notag
\end{eqnarray}%
and%
\begin{eqnarray}
&&\left. 2\left( \lambda _{0}-c_{0}\right) ^{2}+\sum_{n=1}^{\infty }\left[
\left( \lambda _{n}-c_{0}\right) ^{2}+\left( \lambda _{-n}-c_{0}\right)
^{2}-2\left( \tfrac{n\alpha }{\pi ^{1-\alpha }}\right) ^{2}-\tfrac{4\alpha
c_{1}}{\pi ^{\alpha }}-\tfrac{2\alpha }{\pi ^{\alpha }}C_{n}\right] \right.
\medskip  \notag \\
&&\left. =\tfrac{2\alpha }{\pi ^{\alpha }}\left( h+H+\tfrac{1}{2}%
\int_{0}^{\pi }\left( q(t)+p^{2}(t)\right) d_{\alpha }t\right) \right.
\medskip  \notag \\
&&+\tfrac{\alpha }{\pi ^{\alpha }}\int_{0}^{\pi }\left( q(t)+p^{2}(t)\right)
\cos 2Q(t)d_{\alpha }t\medskip  \notag \\
&&+\tfrac{\alpha }{\pi ^{\alpha }}\int_{0}^{\pi }D_{t}^{\alpha }p(t)\sin
2Q(t)d_{\alpha }t\medskip \\
&&+\left( p(\pi )-c_{0}\right) \left( p(\pi )-p(0)\right) -\left( p(\pi
)-c_{0}\right) ^{1+\alpha }+\left( p(0)-c_{0}\right) ^{1+\alpha }\medskip 
\notag \\
&&-\tfrac{\left( p(\pi )-p(0)\right) ^{1+\alpha }}{2\left( 1+\alpha \right) }%
+2hH+\left( h+H\right) \int_{0}^{\pi }\left( q(t)+p^{2}(t)\right) d_{\alpha
}t\medskip  \notag \\
&&+\tfrac{1}{4}\left( \int_{0}^{\pi }\left( q(t)+p^{2}(t)\right) d_{\alpha
}t\right) ^{2}  \notag
\end{eqnarray}%
where$\medskip $\newline
$B_{n}=\int_{0}^{\pi }\left( q(t)+p^{2}(t)\right) \sin \left( \frac{2n\alpha
t^{\alpha }}{\pi ^{2\alpha -1}}\right) \sin 2Q(t)d_{\alpha }t\medskip $%
\newline
$-\int_{0}^{\pi }D_{t}^{\alpha }p(t)\sin \left( \frac{2n\alpha t^{\alpha }}{%
\pi ^{2\alpha -1}}\right) \cos 2Q(t)d_{\alpha }t,\medskip $\newline
$C_{n}=\int_{0}^{\pi }\left( q(t)+p^{2}(t)\right) \cos \left( \frac{2n\alpha
t^{\alpha }}{\pi ^{2\alpha -1}}\right) \cos 2Q(t)d_{\alpha }t\medskip $%
\newline
$-\int_{0}^{\pi }D_{t}^{\alpha }p(t)\cos \left( \frac{2n\alpha t^{\alpha }}{%
\pi ^{2\alpha -1}}\right) \sin 2Q(t)d_{\alpha }t.\medskip $
\end{theorem}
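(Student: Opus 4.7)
The plan is to apply the contour integration method. Since the eigenvalues $\lambda_n$ are the zeros of $\Delta(\lambda)$ and, by the preceding Corollary, all sufficiently large eigenvalues are simple, the argument principle yields, for each large $N$,
\begin{equation*}
\sum_{|\lambda_n|<R_N}\lambda_n^{k} \;=\; \frac{1}{2\pi i}\oint_{\Gamma_N} \lambda^{k}\,\frac{\Delta'(\lambda)}{\Delta(\lambda)}\,d\lambda, \qquad k=1,2,
\end{equation*}
where $R_N=\tfrac{\alpha}{\pi^{\alpha-1}}(N+\tfrac{1}{2})$. I would split $\Delta$ according to (16) as $\Delta = \Delta_1 + \Delta_2$, with leading term $\Delta_1(\lambda) := -\lambda\sin(\tfrac{\lambda}{\alpha}\pi^{\alpha}-c_0)$ whose zeros are the ``unperturbed'' numbers $\mu_n := \tfrac{n\alpha}{\pi^{\alpha-1}}+\tfrac{\alpha c_0}{\pi^{\alpha}}$, and then factor $\Delta = \Delta_1\,G$ with $G = 1 + \Delta_2/\Delta_1$, so that $\Delta'/\Delta = \Delta_1'/\Delta_1 + (\ln G)'$.

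For formula (18) I would set $k=1$. The term $\oint \lambda\,\Delta_1'/\Delta_1\,d\lambda$ is evaluated by residues to give $\sum \mu_n$, which accounts for the ``$-2c_0$'' counterterms in the series on the left-hand side. For $(\ln G)'$ I would expand $\ln(1+z)=z-\tfrac{z^2}{2}+\cdots$ and retain only those terms producing finite contributions after multiplication by $\lambda$: terms of order $O(|\lambda|^{-2})$ in $\Delta_2/\Delta_1$ drop out, and only the $O(|\lambda|^{-1})$ oscillatory integrals in (16) survive. Substituting these and interchanging the $t$- and $\lambda$-integrations via Fubini reduces each contribution to a contour integral of an oscillatory kernel in $\lambda$ with parameter $t$; evaluating these recovers the $B_n/n$ terms on the left of (18) and the finite boundary integrals against $\sin 2Q(t)$ and $\cos 2Q(t)$ on the right.

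For formula (19) I would take $k=2$. Now the factor $\lambda^2$ amplifies lower-order terms, so I must retain the quadratic term $-z^2/2$ in $\ln(1+z)$ (cross-products of two $O(|\lambda|^{-1})$ pieces from (16) produce $O(|\lambda|^{-2})$ contributions that survive) and, for the purely constant asymptotics, use the refined expansion (15) for $\varphi(\pi,\lambda)$ and $D_x^{\alpha}\varphi(\pi,\lambda)$ rather than merely (16). The regularizing counterterms $2(n\alpha/\pi^{1-\alpha})^2 + 4\alpha c_1/\pi^{\alpha}$ on the left arise from expanding $\mu_n^2$, while the quadratic constants on the right --- $hH$, $p^{1+\alpha}(\pi)-p^{1+\alpha}(0)$, $(p(\pi)-p(0))^{1+\alpha}/(1+\alpha)$, and the mixed Fourier-type sum involving $C_n$ --- emerge from the $1/\lambda^2$ coefficient in (15) together with the cross terms in the expansion of $\ln G$.

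The main obstacle is the bookkeeping of the oscillatory contour integrals. One must justify uniform convergence on the sequence of contours $\Gamma_N$, apply Fubini to swap the $t$- and $\lambda$-integrations, and then evaluate inner integrals of the schematic form
\begin{equation*}
\frac{1}{2\pi i}\oint_{\Gamma_N}\frac{\lambda^{k-1}\,\sin\!\bigl[\tfrac{\lambda}{\alpha}(\pi^{\alpha}-2t^{\alpha})+R(t)\bigr]}{\sin(\tfrac{\lambda}{\alpha}\pi^{\alpha}-c_0)}\,d\lambda,
\end{equation*}
and their cosine counterparts, by summing residues at the zeros of the denominator and controlling the contribution from $|\lambda|=R_N$ as $N\to\infty$. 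The delicate point is that the eigenvalue sum $\sum \lambda_n^k$ and the comparison sum $\sum \mu_n^k$ are individually divergent; only after carefully matching the regularization terms written on the left of (18) and (19) do the divergent parts cancel, leaving the finite right-hand sides upon $N\to\infty$.
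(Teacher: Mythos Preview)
Your overall strategy --- contour integration of $\lambda^{k}\Delta'(\lambda)/\Delta(\lambda)$ over $\Gamma_{N}$, subtraction of a comparison function, and expansion of $\ln(\Delta/\Delta_{0})$ --- is exactly what the paper does. But your treatment of $c_{0}$ contains a concrete error, and it is precisely here that the paper's argument diverges from yours.

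You take $\Delta_{1}(\lambda)=-\lambda\sin\bigl(\tfrac{\lambda}{\alpha}\pi^{\alpha}-c_{0}\bigr)$, whose nonzero roots are $\mu_{n}=\tfrac{n\alpha}{\pi^{\alpha-1}}+\tfrac{\alpha c_{0}}{\pi^{\alpha}}$, and claim that $\sum\mu_{n}$ ``accounts for the $-2c_{0}$ counterterms.'' It does not: $\mu_{n}+\mu_{-n}=\tfrac{2\alpha c_{0}}{\pi^{\alpha}}$, which equals $2c_{0}$ only when $\alpha=1$. Your subtraction therefore produces the wrong regularizing constants in (18), and for $k=2$ it yields a formula for $\sum\lambda_{n}^{2}$ rather than for $\sum(\lambda_{n}-c_{0})^{2}$ as (19) requires. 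In addition your $\Delta_{1}$ acquires a spurious simple zero at $\lambda=0$ whenever $c_{0}\neq0$. The paper's device is to \emph{first} substitute $\widetilde{\lambda}=\lambda-c_{0}$, $\widetilde{p}(x)=p(x)-c_{0}$, $\widetilde{q}(x)=q(x)+2c_{0}p(x)-c_{0}^{2}$, which rewrites (1) as an equation of the same type with $\widetilde{c}_{0}=\int_{0}^{\pi}\widetilde{p}\,d_{\alpha}t=0$. One then runs the contour argument with the genuinely symmetric comparison function $\Delta_{0}(\lambda)=-\lambda\sin(\tfrac{\lambda}{\alpha}\pi^{\alpha})$, and the appearance of $\lambda_{n}-c_{0}$, $p(\pi)-c_{0}$, $p(0)-c_{0}$ in (18)--(19) comes for free from undoing the substitution at the end. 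This reduction is the step your plan is missing.

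Two further differences worth noting. First, after forming $\oint\lambda^{k}\,d\ln(\Delta/\Delta_{0})$ the paper integrates by parts once, so that one works with $-\oint\lambda^{k-1}\ln(\Delta/\Delta_{0})\,d\lambda$ (up to a factor $k$); this is what makes the Taylor expansion of $\ln(1-u)$ immediately useful. Second, the oscillatory contour integrals are not handled by Fubini and residue sums over the zeros of the denominator as you propose; instead the paper pulls out a factor $\cot(\tfrac{\lambda}{\alpha}\pi^{\alpha})$ from $\Delta/\Delta_{0}$, invokes the partial-fraction identity $\cot z=\tfrac{1}{z}+2z\sum_{n\ge1}(z^{2}-n^{2}\pi^{2})^{-1}$, and reads off the residues directly --- this is what produces the $B_{n}$ and $C_{n}$ sums without any interchange-of-integration argument. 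The asymptotic (16) already carries the $c_{2},c_{3}$ constants needed for (19), so there is no need to go back to (15).
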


\begin{proof}
Firstly, we consider the case $c_{0}=0.$

Denote%
\begin{equation}
\Delta _{0}(\lambda )=-\lambda \sin \left( \tfrac{\lambda }{\alpha }\pi
^{\alpha }\right) .
\end{equation}

It is clear that the zeros of the function $\Delta _{0}(\lambda )$ is%
\begin{equation*}
\mu _{n}=\frac{n\alpha }{\pi ^{\alpha -1}},\text{ }n\in 
%TCIMACRO{\U{2124} }%
%BeginExpansion
\mathbb{Z}
%EndExpansion
,
\end{equation*}%
where only $\mu _{0}=0$ is double. We note that for each fixed $\alpha $ and
sufficiently large $N,$ the eigenvalues $\lambda _{n}$ which are the zeros
of $\Delta (\lambda )$ are inside $\Gamma _{N}$ and the numbers $\mu _{n}$
do not lie on the contour $\Gamma _{N}.$

Let $\Delta (\lambda )=\lambda \left( \lambda -\lambda _{n}\right) $ and $%
\Delta _{0}(\lambda )=\lambda \left( \lambda -\frac{n\alpha }{\pi ^{\alpha
-1}}\right) ,$ then, from the logaritmic derivatives of the functions $%
\Delta (\lambda )$ and $\Delta _{0}(\lambda ),$ we have%
\begin{equation*}
\lambda \frac{\overset{\cdot }{\Delta }(\lambda )}{\Delta (\lambda )}=\frac{%
2\lambda -\lambda _{n}}{\lambda -\lambda _{n}}\text{ and }\lambda \frac{%
\overset{\cdot }{\Delta }_{0}(\lambda )}{\Delta _{0}(\lambda )}=\frac{%
2\lambda -\frac{n\alpha }{\pi ^{\alpha -1}}}{\lambda -\frac{n\alpha }{\pi
^{\alpha -1}}}
\end{equation*}%
respectively, where $\overset{\cdot }{\Delta }=\frac{d}{d\lambda }.$

Thus, from the residue theorem, the following equalities are valid: 
\begin{equation}
\tfrac{1}{2\pi i}\oint_{\Gamma _{N}}\lambda \tfrac{\overset{\cdot }{\Delta }%
(\lambda )}{\Delta (\lambda )}d\lambda =\sum_{n=-N}^{N}\func{Re}\text{s}%
\left( \lambda \tfrac{\overset{\cdot }{\Delta }(\lambda )}{\Delta (\lambda )}%
,\lambda _{n}\right) =\sum_{n=0}^{N}\left( \lambda _{n}+\lambda _{-n}\right)
\end{equation}%
and similarly%
\begin{equation}
\tfrac{1}{2\pi i}\oint_{\Gamma _{N}}\lambda \tfrac{\overset{\cdot }{\Delta }%
_{0}(\lambda )}{\Delta _{0}(\lambda )}d\lambda =\sum_{n=-N}^{N}\func{Re}%
\text{s}\left( \lambda \tfrac{\overset{\cdot }{\Delta }_{0}(\lambda )}{%
\Delta _{0}(\lambda )},\tfrac{n\alpha }{\pi ^{\alpha -1}}\right)
=\sum_{n=0}^{N}\left[ \tfrac{n\alpha }{\pi ^{\alpha -1}}+\left( -\tfrac{%
n\alpha }{\pi ^{\alpha -1}}\right) \right] .
\end{equation}

Subtracting (21) and (22) side by side, we get 
\begin{eqnarray}
&&\left. \sum_{n=0}^{N}\left( \lambda _{n}+\lambda _{-n}\right) =\tfrac{1}{%
2\pi i}\oint_{\Gamma _{N}}\lambda \left( \tfrac{\overset{\cdot }{\Delta }%
(\lambda )}{\Delta (\lambda )}-\tfrac{\overset{\cdot }{\Delta }_{0}(\lambda )%
}{\Delta _{0}(\lambda )}\right) d\lambda \right. \medskip  \notag \\
&&\left. =\tfrac{1}{2\pi i}\oint_{\Gamma _{N}}\lambda d\left( \ln \tfrac{%
\Delta (\lambda )}{\Delta _{0}(\lambda )}\right) =-\tfrac{1}{2\pi i}%
\oint_{\Gamma _{N}}\ln \tfrac{\Delta (\lambda )}{\Delta _{0}(\lambda )}%
d\lambda \right. .
\end{eqnarray}

On the other hand, it follows from (16) and (20) that%
\begin{eqnarray}
&&\left. \tfrac{\Delta (\lambda )}{\Delta _{0}(\lambda )}=1-\tfrac{p(\pi
)+p(0)+2A\left( \lambda \right) }{2\lambda }-\tfrac{c_{1}+B\left( \lambda
\right) }{\lambda }\cot \left( \tfrac{\lambda }{\alpha }\pi ^{\alpha
}\right) \right. \medskip  \notag \\
&&\left. -\tfrac{c_{2}}{\lambda ^{2}}-\tfrac{c_{3}}{\lambda ^{2}}\cot \left( 
\tfrac{\lambda }{\alpha }\pi ^{\alpha }\right) +O\left( \tfrac{1}{\lambda
^{3}}\exp \left( \tfrac{\left\vert \tau \right\vert }{\alpha }\pi ^{\alpha
}\right) \right) ,\text{ on }\Gamma _{N},\right.
\end{eqnarray}%
where$\medskip $\newline
$A\left( \lambda \right) =\tfrac{1}{2}\int_{0}^{\pi }\left(
q(t)+p^{2}(t)\right) \sin \left( \tfrac{2\lambda t^{\alpha }}{\alpha }%
-2Q(t)\right) d_{\alpha }t+\tfrac{1}{2}\int_{0}^{\pi }D_{t}^{\alpha
}p(t)\cos \left( \tfrac{2\lambda t^{\alpha }}{\alpha }-2Q(t)\right)
d_{\alpha }t,\medskip $\newline
$B\left( \lambda \right) =\tfrac{1}{2}\int_{0}^{\pi }\left(
q(t)+p^{2}(t)\right) \cos \left( \tfrac{2\lambda t^{\alpha }}{\alpha }%
-2Q(t)\right) d_{\alpha }t-\tfrac{1}{2}\int_{0}^{\pi }D_{t}^{\alpha
}p(t)\sin \left( \tfrac{2\lambda t^{\alpha }}{\alpha }-2Q(t)\right)
d_{\alpha }t.\medskip $

Taking Taylor's expansion formula for the ln$\left( 1-u\right) $ into
account, we get 
\begin{equation}
\ln \left( \tfrac{\Delta (\lambda )}{\Delta _{0}(\lambda )}\right) =-\tfrac{%
p(\pi )+p(0)+2A\left( \lambda \right) }{2\lambda }-\tfrac{c_{1}+B\left(
\lambda \right) }{\lambda }\cot \left( \tfrac{\lambda }{\alpha }\pi ^{\alpha
}\right) +O\left( \tfrac{1}{\lambda ^{2}}\right) ,\text{ on }\Gamma _{N},
\end{equation}

Thus, substituting (24) into (23) we find%
\begin{eqnarray}
&&\left. \sum_{n=0}^{N}\left( \lambda _{n}+\lambda _{-n}\right) =\tfrac{1}{%
2\pi i}\oint_{\Gamma _{N}}\tfrac{p(\pi )+p(0)+2A\left( \lambda \right) }{%
2\lambda }d\lambda \right. \medskip  \notag \\
&&\left. +\tfrac{1}{2\pi i}\oint_{\Gamma _{N}}\tfrac{c_{1}+B\left( \lambda
\right) }{\lambda }\cot \left( \tfrac{\lambda }{\alpha }\pi ^{\alpha
}\right) d\lambda +\tfrac{1}{2\pi i}\oint_{\Gamma _{N}}O\left( \tfrac{1}{%
\lambda ^{2}}\right) d\lambda \right. .
\end{eqnarray}

By the well-known formulas such as the generalized Cauchy integral formula,
the residue theorem and $\cot z=\tfrac{1}{z}+2z\sum_{n=1}^{\infty }\frac{1}{%
z^{2}-n^{2}\pi ^{2}}$ (see [51] for more details), the contour integrals in
(26) calculate that%
\begin{equation}
\tfrac{1}{2\pi i}\oint_{\Gamma _{N}}\tfrac{p(\pi )+p(0)+2A\left( \lambda
\right) }{2\lambda }d\lambda =\tfrac{p(\pi )+p(0)}{2}+A\left( 0\right) ,
\end{equation}%
\begin{eqnarray}
&&\left. \tfrac{1}{2\pi i}\oint_{\Gamma _{N}}\tfrac{c_{1}+B\left( \lambda
\right) }{\lambda }\cot \left( \tfrac{\lambda }{\alpha }\pi ^{\alpha
}\right) d\lambda =\tfrac{\alpha }{\pi ^{\alpha }}\overset{\cdot }{B}%
(0)\right. \medskip  \notag \\
&&\left. +\tfrac{1}{\alpha \pi ^{1-\alpha }}\sum_{n=1}^{N}\tfrac{1}{n}\left(
B\left( \tfrac{n\alpha ^{2}}{\pi ^{2\alpha -1}}\right) -B\left( -\tfrac{%
n\alpha ^{2}}{\pi ^{2\alpha -1}}\right) \right) ,\right.
\end{eqnarray}%
and for sufficiently large $N$ and each fixed $\alpha $ 
\begin{equation}
\left\vert \oint_{\Gamma _{N}}O\left( \tfrac{1}{\lambda ^{2}}\right)
d\lambda \right\vert =O\left( \tfrac{1}{N}\right) .
\end{equation}

From (26)-(29), we get%
\begin{eqnarray}
&&\left. 2\lambda _{0}+\sum_{n=1}^{N}\left[ \lambda _{n}+\lambda _{-n}-%
\tfrac{1}{n\alpha \pi ^{1-\alpha }}\left( B\left( \tfrac{n\alpha ^{2}}{\pi
^{2\alpha -1}}\right) -B\left( -\tfrac{n\alpha ^{2}}{\pi ^{2\alpha -1}}%
\right) \right) \right] \right. \medskip  \notag \\
&&\left. =\tfrac{p(\pi )+p(0)}{2}+A\left( 0\right) +\tfrac{\alpha }{\pi
^{\alpha }}\overset{\cdot }{B}(0)+O\left( \tfrac{1}{N}\right) \right. .
\end{eqnarray}

For $N\rightarrow \infty $ in (30), 
\begin{eqnarray}
&&\left. 2\lambda _{0}+\sum_{n=1}^{\infty }\left[ \lambda _{n}+\lambda _{-n}-%
\tfrac{1}{n\alpha \pi ^{1-\alpha }}\left( B\left( \tfrac{n\alpha ^{2}}{\pi
^{2\alpha -1}}\right) -B\left( -\tfrac{n\alpha ^{2}}{\pi ^{2\alpha -1}}%
\right) \right) \right] \right. \medskip  \notag \\
&&\left. =\tfrac{p(\pi )+p(0)}{2}+A\left( 0\right) +\tfrac{\alpha }{\pi
^{\alpha }}\overset{\cdot }{B}(0)\right.
\end{eqnarray}%
is obtained.

Now we consider the case $c_{0}\neq 0.$

It is obvious that we can rewrite the equation (1) as%
\begin{equation*}
-D_{x}^{\alpha }D_{x}^{\alpha }y+\left[ 2\left( \lambda -c_{0}\right) \left(
p(x)-c_{0}\right) +q(x)+2c_{0}p(x)-c_{0}^{2}\right] y=\left( \lambda
-c_{0}\right) ^{2}y.
\end{equation*}

Denote $\lambda -c_{0}=\widetilde{\lambda },$ $q(x)+2c_{0}p(x)-c_{0}^{2}=%
\widetilde{q}(x)$ and $p(x)-c_{0}=\widetilde{p}(x).$ Thus,%
\begin{equation}
-D_{x}^{\alpha }D_{x}^{\alpha }y+\left[ 2\widetilde{\lambda }\widetilde{p}%
(x)+\widetilde{q}(x)\right] y=\widetilde{\lambda }^{2}y.
\end{equation}

For the equation (32), at case $\widetilde{c}_{0}=\int_{0}^{\pi }\widetilde{p%
}(t)d_{\alpha }t=0$, according to (31) we obtain%
\begin{eqnarray}
&&\left. 2\widetilde{\lambda }_{0}+\sum_{n=1}^{\infty }\left[ \widetilde{%
\lambda }_{n}+\widetilde{\lambda }_{-n}-\tfrac{B_{n}}{n\alpha \pi ^{1-\alpha
}}\right] \right. \medskip  \notag \\
&&\left. =\tfrac{\widetilde{p}(\pi )+\widetilde{p}(0)}{2}+\widetilde{A}%
\left( 0\right) +\tfrac{\alpha }{\pi ^{\alpha }}\overset{\cdot }{\widetilde{B%
}}(0)\right. ,
\end{eqnarray}%
where, 
\begin{equation*}
A\left( \widetilde{\lambda }\right) :=\widetilde{A}\left( \lambda \right)
=A\left( \lambda \right) ,B\left( \widetilde{\lambda }\right) :=\widetilde{B}%
\left( \lambda \right) =B\left( \lambda \right) \text{ and }\widetilde{B}%
\left( \tfrac{n\alpha ^{2}}{\pi ^{2\alpha -1}}\right) -\widetilde{B}\left( -%
\tfrac{n\alpha ^{2}}{\pi ^{2\alpha -1}}\right) =B_{n}.
\end{equation*}

Substituting the expressions of $\widetilde{\lambda }_{n},$ $\widetilde{q}%
(x) $ and $\widetilde{p}(x)$ into (33), we arrive at (18).

Similarly, we prove that the formula (19) is true.

We consider the case $c_{0}=0$ again$.$ Denote $\Delta (\lambda )=\lambda
^{2}\left( \lambda -\lambda _{n}\right) $ and $\Delta _{0}(\lambda )=\lambda
^{2}\left( \lambda -\frac{n\alpha }{\pi ^{\alpha -1}}\right) .$ Then, 
\begin{equation*}
\lambda ^{2}\frac{\overset{\cdot }{\Delta }(\lambda )}{\Delta (\lambda )}=%
\frac{3\lambda ^{2}-2\lambda \lambda _{n}}{\lambda -\lambda _{n}}\text{ and }%
\lambda ^{2}\frac{\overset{\cdot }{\Delta }_{0}(\lambda )}{\Delta
_{0}(\lambda )}=\frac{3\lambda ^{2}-2\lambda \frac{n\alpha }{\pi ^{\alpha -1}%
}}{\lambda -\frac{n\alpha }{\pi ^{\alpha -1}}}.
\end{equation*}

Thus, the following equalities are valid: 
\begin{equation}
\tfrac{1}{2\pi i}\oint_{\Gamma _{N}}\lambda ^{2}\tfrac{\overset{\cdot }{%
\Delta }(\lambda )}{\Delta (\lambda )}d\lambda =\sum_{n=-N}^{N}\func{Re}%
\text{s}\left( \lambda ^{2}\tfrac{\overset{\cdot }{\Delta }(\lambda )}{%
\Delta (\lambda )},\lambda _{n}\right) =\sum_{n=0}^{N}\left( \lambda
_{n}^{2}+\lambda _{-n}^{2}\right)
\end{equation}%
and 
\begin{equation}
\tfrac{1}{2\pi i}\oint_{\Gamma _{N}}\lambda ^{2}\tfrac{\overset{\cdot }{%
\Delta }_{0}(\lambda )}{\Delta _{0}(\lambda )}d\lambda =\sum_{n=-N}^{N}\func{%
Re}\text{s}\left( \lambda \tfrac{\overset{\cdot }{\Delta }_{0}(\lambda )}{%
\Delta _{0}(\lambda )},\tfrac{n\alpha }{\pi ^{\alpha -1}}\right)
=\sum_{n=0}^{N}\left[ \left( \tfrac{n\alpha }{\pi ^{\alpha -1}}\right)
^{2}+\left( -\tfrac{n\alpha }{\pi ^{\alpha -1}}\right) ^{2}\right] .
\end{equation}%
{\small \ }

Subtracting (34) and (35) side by side, we get 
\begin{eqnarray}
&&\left. \sum_{n=0}^{N}\left[ \lambda _{n}^{2}+\lambda _{-n}^{2}-\left( 
\tfrac{n\alpha }{\pi ^{\alpha -1}}\right) ^{2}-\left( -\tfrac{n\alpha }{\pi
^{\alpha -1}}\right) ^{2}\right] =\tfrac{1}{2\pi i}\oint_{\Gamma
_{N}}\lambda ^{2}\left( \tfrac{\overset{\cdot }{\Delta }(\lambda )}{\Delta
(\lambda )}-\tfrac{\overset{\cdot }{\Delta }_{0}(\lambda )}{\Delta
_{0}(\lambda )}\right) d\lambda \right. \medskip  \notag \\
&&\left. =\tfrac{1}{2\pi i}\oint_{\Gamma _{N}}\lambda ^{2}d\left( \ln \tfrac{%
\Delta (\lambda )}{\Delta _{0}(\lambda )}\right) =-\tfrac{1}{2\pi i}%
\oint_{\Gamma _{N}}2\lambda \ln \tfrac{\Delta (\lambda )}{\Delta
_{0}(\lambda )}d\lambda .\right.
\end{eqnarray}

From (24), we obtain 
\begin{eqnarray}
&&\left. \ln \left( \tfrac{\Delta (\lambda )}{\Delta _{0}(\lambda )}\right)
=-\tfrac{p(\pi )+p(0)+2A\left( \lambda \right) }{2\lambda }-\tfrac{%
c_{1}+B\left( \lambda \right) }{\lambda }\cot \left( \tfrac{\lambda }{\alpha 
}\pi ^{\alpha }\right) \right. \medskip  \notag \\
&&\left. -\tfrac{c_{2}}{\lambda ^{2}}-\tfrac{c_{3}}{\lambda ^{2}}\cot \left( 
\tfrac{\lambda }{\alpha }\pi ^{\alpha }\right) +O\left( \tfrac{1}{\lambda
^{3}}\right) ,\text{ on }\Gamma _{N}\right. .
\end{eqnarray}

Thus, the integral on the right side of (36) is written as%
\begin{eqnarray}
&&\left. -\tfrac{1}{2\pi i}\oint_{\Gamma _{N}}2\lambda \ln \tfrac{\Delta
(\lambda )}{\Delta _{0}(\lambda )}d\lambda =\tfrac{1}{2\pi i}\oint_{\Gamma
_{N}}\left( p(\pi )+p(0)+2A\left( \lambda \right) \right) d\lambda \right.
\medskip  \notag \\
&&\left. +\tfrac{1}{2\pi i}\oint_{\Gamma _{N}}2\left( c_{1}+B\left( \lambda
\right) \right) \cot \left( \tfrac{\lambda }{\alpha }\pi ^{\alpha }\right)
d\lambda +\tfrac{1}{2\pi i}\oint_{\Gamma _{N}}\tfrac{2c_{2}}{\lambda }%
d\lambda \right. \medskip \\
&&\left. +\tfrac{1}{2\pi i}\oint_{\Gamma _{N}}\tfrac{2c_{3}}{\lambda }\cot
\left( \tfrac{\lambda }{\alpha }\pi ^{\alpha }\right) d\lambda +\tfrac{1}{%
2\pi i}\oint_{\Gamma _{N}}O\left( \tfrac{1}{\lambda ^{2}}\right) d\lambda
\right. .  \notag
\end{eqnarray}

For the contour integrals in (38), we have%
\begin{equation}
\tfrac{1}{2\pi i}\oint_{\Gamma _{N}}\left[ p(\pi )+p(0)+2A\left( \lambda
\right) \right] d\lambda =0,
\end{equation}%
\begin{eqnarray}
&&\left. \tfrac{1}{2\pi i}\oint_{\Gamma _{N}}2\left[ c_{1}+B\left( \lambda
\right) \right] \cot \left( \tfrac{\lambda }{\alpha }\pi ^{\alpha }\right)
d\lambda =\tfrac{2\alpha c_{1}}{\pi ^{\alpha }}+\tfrac{4\alpha c_{1}}{\pi
^{\alpha }}N\right. \medskip  \notag \\
&&\left. +\tfrac{2\alpha }{\pi ^{\alpha }}B(0)+\tfrac{2\alpha }{\pi ^{\alpha
}}\sum_{n=1}^{N}\left[ B\left( \tfrac{n\alpha ^{2}}{\pi ^{2\alpha -1}}%
\right) +B\left( -\tfrac{n\alpha ^{2}}{\pi ^{2\alpha -1}}\right) \right]
\right. ,
\end{eqnarray}%
\begin{equation}
\tfrac{1}{2\pi i}\oint_{\Gamma _{N}}\tfrac{2c_{2}}{\lambda }d\lambda =2c_{2},
\end{equation}%
\begin{equation}
\tfrac{1}{2\pi i}\oint_{\Gamma _{N}}\tfrac{2c_{3}}{\lambda }\cot \left( 
\tfrac{\lambda }{\alpha }\pi ^{\alpha }\right) d\lambda =0.
\end{equation}

Substituting the expressions of (29) and (39)-(42) into (38), we arrive%
\begin{eqnarray*}
&&\left. -\tfrac{1}{2\pi i}\oint_{\Gamma _{N}}2\lambda \ln \tfrac{\Delta
(\lambda )}{\Delta _{0}(\lambda )}d\lambda =\tfrac{2\alpha c_{1}}{\pi
^{\alpha }}+\tfrac{4\alpha c_{1}}{\pi ^{\alpha }}N\right. \medskip \\
&&\left. +\tfrac{2\alpha }{\pi ^{\alpha }}B(0)+\tfrac{2\alpha }{\pi ^{\alpha
}}\sum_{n=1}^{N}\left[ B\left( \tfrac{n\alpha ^{2}}{\pi ^{2\alpha -1}}%
\right) +B\left( -\tfrac{n\alpha ^{2}}{\pi ^{2\alpha -1}}\right) \right]
+2c_{2}+O\left( \tfrac{1}{N}\right) \right.
\end{eqnarray*}%
and from (36)%
\begin{eqnarray}
&&\left. 2\lambda _{0}^{2}+\sum_{n=0}^{N}\left[ \lambda _{n}^{2}+\lambda
_{-n}^{2}-2\left( \tfrac{n\alpha }{\pi ^{\alpha -1}}\right) ^{2}-\tfrac{%
4\alpha c_{1}}{\pi ^{\alpha }}-\tfrac{2\alpha }{\pi ^{\alpha }}\left(
B\left( \tfrac{n\alpha ^{2}}{\pi ^{2\alpha -1}}\right) +B\left( -\tfrac{%
n\alpha ^{2}}{\pi ^{2\alpha -1}}\right) \right) \right] \right. \medskip 
\notag \\
&&\left. =\tfrac{2\alpha c_{1}}{\pi ^{\alpha }}+\tfrac{2\alpha }{\pi
^{\alpha }}B(0)+2c_{2}+O\left( \tfrac{1}{N}\right) \right. .
\end{eqnarray}

For $N\rightarrow \infty $ in (43)%
\begin{eqnarray}
&&\left. 2\lambda _{0}^{2}+\sum_{n=0}^{\infty }\left[ \lambda
_{n}^{2}+\lambda _{-n}^{2}-2\left( \tfrac{n\alpha }{\pi ^{\alpha -1}}\right)
^{2}-\tfrac{4\alpha c_{1}}{\pi ^{\alpha }}-\tfrac{2\alpha }{\pi ^{\alpha }}%
\left( B\left( \tfrac{n\alpha ^{2}}{\pi ^{2\alpha -1}}\right) +B\left( -%
\tfrac{n\alpha ^{2}}{\pi ^{2\alpha -1}}\right) \right) \right] \right.
\medskip  \notag \\
&&\left. =\tfrac{2\alpha c_{1}}{\pi ^{\alpha }}+\tfrac{2\alpha }{\pi
^{\alpha }}B(0)+2c_{2}\right.
\end{eqnarray}%
is obtained.

In the case $c_{0}\neq 0,$ from (44) we can write that%
\begin{eqnarray}
&&\left. 2\widetilde{\lambda }_{0}^{2}+\sum_{n=0}^{\infty }\left[ \widetilde{%
\lambda }_{n}^{2}+\widetilde{\lambda }_{-n}^{2}-2\left( \tfrac{n\alpha }{\pi
^{\alpha -1}}\right) ^{2}-\tfrac{4\alpha \widetilde{c}_{1}}{\pi ^{\alpha }}-%
\tfrac{2\alpha }{\pi ^{\alpha }}C_{n}\right] \right. \medskip  \notag \\
&&\left. =\tfrac{2\alpha \widetilde{c}_{1}}{\pi ^{\alpha }}+\tfrac{2\alpha }{%
\pi ^{\alpha }}\widetilde{B}(0)+2\widetilde{c}_{2}\right. .
\end{eqnarray}

Substituting the known expressions of $\widetilde{\lambda }_{n},$ $%
\widetilde{q}(x)$ and $\widetilde{p}(x)$ into (45), we arrive the formula
(19), where$\medskip $

$\widetilde{B}\left( \tfrac{n\alpha ^{2}}{\pi ^{2\alpha -1}}\right) +%
\widetilde{B}\left( -\tfrac{n\alpha ^{2}}{\pi ^{2\alpha -1}}\right)
=C_{n},\medskip $

$\widetilde{c}_{1}=c_{1},\medskip $

$\widetilde{c}_{2}=\frac{\left( p(\pi )-c_{0}\right) \left( p(\pi
)-p(0)\right) }{2}-\frac{\left( p(\pi )-c_{0}\right) ^{1+\alpha }+\left(
p(0)-c_{0}\right) ^{1+\alpha }}{2}-\tfrac{\left( p(\pi )-p(0)\right)
^{1+\alpha }}{4\left( 1+\alpha \right) }+hH\medskip $

$+\frac{\left( h+H\right) }{2}\int_{0}^{\pi }\left( q(t)+p^{2}(t)\right)
d_{\alpha }t+\tfrac{1}{8}\left( \int_{0}^{\pi }\left( q(t)+p^{2}(t)\right)
d_{\alpha }t\right) ^{2}.\medskip $
\end{proof}

\end{document}